\documentclass[11pt, oneside]{article}

\usepackage{amsmath, amssymb}
\usepackage{geometry}
\usepackage[dvips]{graphicx}
\usepackage{graphics}
\usepackage{epsfig}
\usepackage{psfrag, pstricks}

\usepackage{url}

\geometry{
  verbose,
  dvips,
  width=400pt, marginparsep=0pt, marginparwidth=0pt,
  top=100pt, headheight=12pt, headsep=10pt, footskip=30pt,
  bottom=110pt}

\linespread{1}

\setlength{\parskip}{\medskipamount}

\newtheorem{theorem}{Theorem}

\newtheorem{lemma}[theorem]{Lemma}

\newenvironment{proof}{{\noindent \textbf{\textit{Proof.}}}}
{\hfill $\Box$}

\long\def\symbolfootnote[#1]#2{\begingroup\def\thefootnote{\fnsymbol{footnote}}
\footnote[#1]{#2}\endgroup}

\begin{document}

\title{A Note on Schnyder's Theorem}
\author{Fidel Barrera-Cruz\footnote{fbarreracruz@uwaterloo.ca;
  Partially supported by CONACYT, Project 209395/304106.}
\\Penny Haxell\footnote{pehaxell@math.uwaterloo.ca; Partially
supported by NSERC.}\\
\medskip\\
Department of Combinatorics and Optimization\\
University of Waterloo\\
Waterloo, Ontario, Canada N2L 3G1}
\date{}

\maketitle

\begin{abstract}
We give an alternate proof of Schnyder's Theorem, that the incidence
poset of a graph $G$ has dimension at most three if and only if $G$ is
planar.
\end{abstract}

\noindent {\bf Keywords:} planar graphs, poset dimension, Schnyder's
theorem

\noindent {\bf AMS Subject Classification:} 05C10, 06A07

\noindent Order journal (July 2011) 28:221-226. Submitted: April 2010. Accepted: July 2010.

\noindent Final publication available at:
\url{http://dx.doi.org/10.1007/s11083-010-9167-z}

\begin{section}{Introduction}
An important theorem of Schnyder \cite{S} from 1989 relates two
different notions of dimension for graphs $G$, namely the
dimension of the incidence poset of $G$, and the minimum dimension in
which $G$ has a geometric realization. Schnyder's Theorem states that
the incidence poset of $G$ has dimension at most three if and only if
$G$ is planar. This work implies many useful results about planar
graphs (see e.g. \cite{S}) and motivated a significant body of
further work, for example \cite{BT1, BT2, CFL, F1, F2, FT, FZ, pom}.

Schnyder's proof developed a substantial amount of theory, involving
notions of \emph{normal labellings}, \emph{dual orders}, and
\emph{tree decompositions} among others, which itself contributes much
to the useful consequences of his theorem mentioned above. Each of the
two implications of Schnyder's Theorem can also be derived from other
works, for example \cite{BD, pom, DPP}. However,
we felt it could be useful to have a more direct proof, and
the aim of this note is to provide one.

\end{section}

\begin{section}{Basics}
Let $V$ be a finite set of vertices. We will say that a set
$R=\{<_1,<_2,<_3\}$ of three linear orders 
of $V$  is a \emph{standard representation} of $V$ if the
following conditions hold.
\begin{itemize}
\item ($R$ \emph{represents} $V$) The orders $<_1,<_2,<_3$ have empty
  intersection, in other words, for 
  every $x\not= y$ 
  in $V$ there exists $i\in\{1,2,3\}$ such that $x<_iy$.
\item ($R$ is \emph{standard}) For each $i$, the maximum element $a_i$
  of $<_i$ is among the 
  two smallest elements of $<_j$ for each $j\not= i$. 
\end{itemize}
We write $R=R(a_1,a_2,a_3)$ to indicate that $a_i$ is the maximum
element of $<_i$ for each $i$. We define a graph $\Sigma_2(R)$ with
vertex set $V$ as follows: the pair $xy$ of distinct vertices is an
edge of $\Sigma_2(R)$ 
if and only if for every $z\in
V\setminus\{x,y\}$ there exists $i\in\{1,2,3\}$ such
that $z>_ix$ and $z>_iy$. (The notation $\Sigma_2(R)$ reflects the
fact that this graph is part of the \emph{complex} $\Sigma(R)$ of $R$,
see e.g. \cite{pom}.)
Then Schnyder's Theorem can be formulated
as follows.

\begin{theorem}\label{sch}(Schnyder's Theorem). Let $G$ be a graph and
  suppose $\{a_1,a_2,a_3\}$ is the vertex set of a triangle in
  $G$. Then $G$ is a planar triangulation in which $a_1a_2a_3$ is a
  face if and only if there exists a standard representation
  $R=R(a_1,a_2,a_3)$ of $V=V(G)$ such that  $G=\Sigma_2(R)$.
\end{theorem}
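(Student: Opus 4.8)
The plan is to prove both directions by induction on $n=|V(G)|$. For the base case $n=3$ we have $V=\{a_1,a_2,a_3\}$: a short check from the definition shows that for any standard representation $R=R(a_1,a_2,a_3)$ of a $3$-element set all three pairs are edges, so $\Sigma_2(R)=K_3$, which is (degenerately) a planar triangulation with $a_1a_2a_3$ a face; conversely $G=K_3$ admits such an $R$. More generally I would first record the structural fact that for every standard $R=R(a_1,a_2,a_3)$ the edges $a_1a_2,a_1a_3,a_2a_3$ lie in $\Sigma_2(R)$: the witness index for the pair $\{a_1,a_2\}$ cannot be $1$ (nothing exceeds $a_1$ in $<_1$) nor $2$ (nothing exceeds $a_2$ in $<_2$), so it must be $3$, and indeed standardness forces $\{a_1,a_2\}$ to be the two smallest elements of $<_3$, so every other vertex exceeds both of them in $<_3$; symmetric arguments handle the other two pairs. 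This fixes $a_1a_2a_3$ as the outer triangle throughout.

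For the forward implication, let $G$ be a planar triangulation on $n\ge 4$ vertices with outer face $a_1a_2a_3$, and pick an internal vertex $v$ (for definiteness one of smallest degree among internal vertices). Deleting $v$ leaves every face triangular except the one bounded by the link cycle $C$ of $v$; retriangulate that face to obtain a planar triangulation $G'$ on $V\setminus\{v\}$, still with outer face $a_1a_2a_3$. Apply the induction hypothesis to get a standard representation $R'=R(a_1,a_2,a_3)$ of $V\setminus\{v\}$ with $\Sigma_2(R')=G'$. Now reinsert $v$: extend each $<'_i$ to a linear order $<_i$ on $V$ by placing $v$ at the position prescribed by its $G$-neighbourhood (essentially, just below the $<'_i$-smallest $G$-neighbour of $v$), and simultaneously perform local surgery on $R'$ along $C$ so as to delete exactly the chords introduced by the retriangulation and to make the $C$-neighbours of $v$ adjacent to $v$. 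One then verifies that the resulting orders have empty common intersection, that $a_1,a_2,a_3$ are still their maxima and still among the two smallest of the other two orders (using that $v$ is internal, hence $v\neq a_i$ and $v$ is not forced low), and finally that $\Sigma_2(R)=G$.

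For the reverse implication, let $R=R(a_1,a_2,a_3)$ be standard with $G=\Sigma_2(R)$ and $n\ge 4$. Here the task is to single out a vertex $v\notin\{a_1,a_2,a_3\}$ whose removal from all three orders, followed by the local surgery inverse to the one above, yields a standard representation $R'$ of $V\setminus\{v\}$ with $\Sigma_2(R')$ a triangulation; by the induction hypothesis $\Sigma_2(R')$ is then a planar triangulation with outer face $a_1a_2a_3$, and one shows $G$ is recovered from it by inserting $v$ into a single face and joining $v$ to the vertices of that face (with a possible further local re-subdivision if the surgery replaced a short path of $C$ by a chord), so that $G$ is again a planar triangulation with the same outer face. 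I expect the crux of the whole argument, in both directions, to be precisely this deletion/insertion bookkeeping: pinning down the exact combinatorial rule for where $v$ belongs in each order, showing the surgically modified orders are genuinely standard, and controlling the globally defined edge set $\Sigma_2$ under these local changes well enough to match $G-v$. By contrast the remaining points — the base case, the identification of $a_1a_2a_3$ as a face, and planarity once the triangulation structure is available — should be routine.
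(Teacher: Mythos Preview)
Your proposal has the right inductive skeleton, but there is a genuine gap at exactly the point you flag as the crux, and the specific mechanism you sketch does not match what actually works.

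In the forward direction you delete an internal vertex $v$, \emph{retriangulate} the hole bounded by its link cycle $C$, apply induction to get $R'$, and then promise to ``perform local surgery on $R'$ along $C$ so as to delete exactly the chords introduced by the retriangulation''. This step is not routine and, as stated, is unlikely to go through. The edge set $\Sigma_2(R)$ is defined globally: to remove a chord $xy$ you must produce a witness $z$ with $z<_ix$ and $z<_iy$ for all $i$, and the only natural candidate is $z=v$. But if $C$ is long the retriangulation may have many chords, and forcing $v$ below both endpoints of each of them in all three orders while keeping $v$ above the right things (so that the edges $vc$ for $c\in C$ appear and no unwanted edges disappear) is exactly the hard combinatorial problem; your placement rule ``just below the $<'_i$-smallest $G$-neighbour of $v$'' does not do this and in general does not even keep the intersection of the three orders empty.

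The paper sidesteps this entirely by using \emph{edge contraction} rather than vertex deletion plus retriangulation. In the direction $\Sigma_2(R)\Rightarrow$ triangulation, the vertex removed is not arbitrary: it is $b=\max_{<_1}V\setminus\{a_1\}$, and the key lemma is that simply suppressing $b$ from all three orders yields a standard $R'$ with $\Sigma_2(R')$ equal to $\Sigma_2(R)/a_1b$ (the contraction of the edge $a_1b$). No retriangulation or surgery is needed, because contracting an edge of a triangulation yields a triangulation. In the other direction one contracts an edge $a_1w_i$ where $w_i$ is a neighbour of $a_1$ having exactly two common neighbours $w_{i-1},w_{i+1}$ with $a_1$ (so the contraction is again a triangulation), applies induction, and then inserts $w_i$ at very specific positions: immediately below $a_1$ in $<'_1$, immediately above $w_{i-1}$ in $<'_2$, and immediately above $w_{i+1}$ in $<'_3$. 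This is quite different from your rule and is what makes the verification of $\Sigma_2(R)=G$ tractable. The missing idea, then, is the contraction/suppression correspondence and the canonical choice of vertex tied to $a_1$; without it the ``bookkeeping'' you anticipate is not merely tedious but does not close.
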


We begin by establishing some basic facts about 
the neighborhood (denoted by $\Gamma$) of $a_1$, and
the vertex $b=\max_{<_1}V\setminus\{a_1\}$ in the graph $\Sigma_2(R)$.
\begin{lemma}\label{sigfacts}
  Let $R=R(a_1,a_2,a_3)$ be a standard representation of $V$, and let 
    the neighbors of $a_{1}$ in $\Sigma_2(R)$ be
    $w_{0}<_{2}w_{1}<_{2}\cdots<_{2}w_{m}<_{2}w_{m+1}$. 
  Then   
\begin{enumerate}
\item $w_{m+1}<_{3}w_{m}<_{3}\cdots<_{3}w_{1}<_{3}w_{0}$, 
\item $w_{0}=a_{3}$ and $w_{m+1}=a_{2}$,
\item the set $S_{i}=\{z\in V:w_{i}<_{2}z<_{2}w_{i+1}\text{ and
  }w_{i+1}<_{3}z<_{3}w_{i}\}$   is empty,
\item $w_{i}w_{i+1}\in \Sigma_2(R)$ for each $i$,
\item the vertex
  $b$ is a neighbor $w_i$ of $a_1$ for some $i$. Moreover,
  if $|V|\geq 4$ then $a_1$
  and $b=w_i$ have exactly two common neighbors, namely $w_{i-1}$ and
  $w_{i+1}$,  
\item if $|V|\geq 4$ then every
  $z\in\Gamma(b)\setminus\{a_1,w_{i-1},w_{i+1}\}$ satisfies
  $w_{i}<_{2}z<_{2}w_{i+1}$ and $w_{i}<_{3}z<_{3}w_{i-1}$.
\end{enumerate}
\end{lemma}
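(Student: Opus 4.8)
The plan is to establish the six parts essentially in order, with~(3) and~(4) deduced from a single auxiliary claim.

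Parts~(1) and~(2) are short. For~(1), take neighbours $x,y$ of $a_1$ with $x<_2y$ and apply the edge condition for $a_1y$ to the vertex $z=x$: the index $j=1$ is never available (as $a_1$ is the $<_1$-maximum) and $j=2$ is excluded by $x<_2y$, so $x>_3y$; thus $<_3$ reverses $<_2$ on $\Gamma(a_1)$. For~(2), first note $a_1a_2,a_1a_3\in\Sigma_2(R)$: by standardness $\{a_1,a_2\}$ are the two $<_3$-smallest vertices, so every other $z$ has $z>_3a_1$ and $z>_3a_2$; hence $a_1a_2$ is an edge and, symmetrically, so is $a_1a_3$. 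Now $a_2$ is the global $<_2$-maximum, so $a_2=w_{m+1}$; and $a_3$ is the global $<_3$-maximum, so by~(1) it is the $<_2$-smallest neighbour, i.e.\ $a_3=w_0$.

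The heart of the argument is the claim that \emph{there is no $z\in V\setminus(\Gamma(a_1)\cup\{a_1\})$ with $z<_2w_{i+1}$ and $z<_3w_i$}. Granting it, part~(3) is immediate, since any $z\in S_i$ lies strictly between the consecutive neighbours $w_i,w_{i+1}$ in $<_2$ and so is not a neighbour of $a_1$; and~(4) follows because a vertex $z$ violating the edge condition for $w_iw_{i+1}$ necessarily has $z<_2w_{i+1}$ and $z<_3w_i$, and such a $z$ is neither $a_1$ (for which $j=1$ works, as $a_1$ is the $<_1$-maximum) nor, by~(1), any other neighbour $w_k$. To prove the claim I would pick a counterexample $z$ that is $<_2$-minimal; since $a_1z\notin\Sigma_2(R)$ there is a witness $u\neq a_1,z$ with $u<_ja_1$ or $u<_jz$ for each $j\in\{2,3\}$. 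It is impossible to have $u<_2a_1$ and $u<_3a_1$ simultaneously (together with $u<_1a_1$ this contradicts that $R$ represents $V$). The main obstacle is then to eliminate the two remaining degenerate cases separately: standardness forces $u<_2a_1$ to give $u=a_3=w_0$, whence $u<_3z$ yields $z>_3a_3$, impossible since $a_3$ is the $<_3$-maximum and $z\neq a_3$; symmetrically $u<_3a_1$ gives $u=a_2=w_{m+1}$ and then $z>_2a_2$, impossible. Hence $u<_2z$ and $u<_3z$, so $u<_2w_{i+1}$ and $u<_3w_i$; moreover $u$ is neither $a_1$ nor, by~(1), any $w_k$, so $u$ is another counterexample with $u<_2z$, contradicting minimality.

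For~(5) and~(6) the point is that $b$ is the $<_1$-second-largest vertex, so $a_1$ is the only vertex $<_1$-above $b$. To see $a_1b\in\Sigma_2(R)$: a vertex $z\neq a_1,b$ violating the edge condition automatically satisfies $z<_1b$, and the only way it could avoid $z<_2b$ and $z<_3b$ is to have $z<_2a_1$ or $z<_3a_1$, which as above forces $z\in\{a_2,a_3\}$ and is then ruled out using that $a_2,a_3$ are the $<_2$- and $<_3$-maxima while $b\notin\{a_2,a_3\}$ when $|V|\ge4$; so $z<_ib$ for all $i$, contradicting that $R$ represents $V$. Hence $b=w_i$, and $b\neq a_2=w_{m+1}$, $b\neq a_3=w_0$ give $1\le i\le m$, so $w_{i-1},w_{i+1}$ exist and, by~(4), are common neighbours of $a_1$ and $b$. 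That these are the only common neighbours, and that every $z\in\Gamma(b)\setminus\{a_1,w_{i-1},w_{i+1}\}$ satisfies the asserted inequalities, then follows by applying the edge conditions of~(4) with $b=w_i$ as one endpoint: there $j=1$ is never available (only $a_1>_1b$), which forces the required $<_2$- and $<_3$-comparisons --- e.g.\ testing the edge $w_iz$ against the witnesses $w_{i+1}$ and $w_{i-1}$ yields $z<_2w_{i+1}$ and $z<_3w_{i-1}$, and testing the edges $w_{i-1}w_i$ and $w_iw_{i+1}$ against the witness $z$ yields $w_i<_2z$ and $w_i<_3z$; the common-neighbour statement is analogous, testing $w_iw_k$ against $w_{i\pm1}$. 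The case $|V|=3$ (the triangle itself) is trivial throughout.
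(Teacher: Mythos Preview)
Your proof is correct and follows essentially the same route as the paper's. A couple of remarks on where you diverge slightly:

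For (3) and (4) you package the key fact as a single auxiliary claim (no non-neighbour $z\neq a_1$ with $z<_2 w_{i+1}$ and $z<_3 w_i$) and derive both parts from it; the paper instead proves $S_i=\emptyset$ directly and then deduces (4) from (3) by using the edges $a_1w_i$ and $a_1w_{i+1}$ to recover the lower bounds $w_i<_2 y$ and $w_{i+1}<_3 y$. The underlying minimal-counterexample argument is the same. One small omission: when you say ``any $z\in S_i$ \ldots\ is not a neighbour of $a_1$'', you should also remark that $z\neq a_1$ (this is immediate from the $<_2$- or $<_3$-bounds and standardness, but your claim requires it).

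Your argument that $a_1b\in\Sigma_2(R)$ is correct but more laboured than necessary. The paper simply observes: for any $x\neq a_1,b$, since $R$ represents $V$ there is some $j$ with $b<_j x$; by the choice of $b$ this $j\neq 1$, and then standardness gives $a_1<_j b<_j x$, so $x>_j a_1$ and $x>_j b$. This avoids your case analysis on $z\in\{a_2,a_3\}$ (and incidentally the side condition $b\notin\{a_2,a_3\}$ is not actually needed for that case analysis anyway).
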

\begin{proof}
For (1), suppose $j<i$. Since $a_1w_i\in\Sigma_2(R)$ we know there is
an
order $k$ in which $a_{1},w_{i}<_{k}w_{j}$. But $k\neq 1$ since $a_1$
is maximum in $<_1$, and $k\neq 2$ by assumption. Thus $k=3$ as
required.

For (2), since $R$ is standard we know each $z\in
  V\setminus\{a_{1},a_{2}\}$ satisfies $a_{1},a_{2}<_{3}z$, so
  $a_1a_2\in\Sigma_2(R)$ and thus $w_{m+1}=a_2$. Similarly
  $a_1a_3\in\Sigma_2(R)$, and so $w_{0}=a_3$ since by (1) we know
  $w_{0}$ is the neighbor of $a_1$ that is highest in $<_3$. 

For (3), suppose on the contary that for some $i$ we have
$y_{\star}=\min_{<_{2}}S_{i}$. Then since $y_{\star}a_{1}$ is not an
  edge of $\Sigma_2(R)$, and $R$ is standard, there exists a
  vertex $z\notin\{y_{\star},a_1\}$ such that $z<_{2}y_{\star}$ and
  $z<_{3}y_{\star}$. Since $w_{i}a_{1}\in\Sigma_2(R)$ we must have
  $w_{i}<_{2}z$. Similarly, we have $w_{i+1}<_{3}z$. This
  contradicts the minimality of $y_{\star}$, since $w_{i+1}<_3 z<_3
  y_{\star}<_3 w_i$ 
  and $w_i<_2 z<_2 y_{\star}<_2 w_{i+1}$. This shows
  $S_i=\emptyset$.

For (4), suppose on the contrary that there exists $y\in
  V\setminus\{w_{i},w_{i+1}\}$ so that for each $k$, 
  either $y<_{k}w_{i}$ or
  $y<_{k}w_{i+1}$. By (1) this implies in particular that
  $y<_{2}w_{i+1}$ and $y<_{3}w_{i}$. But since
  $a_{1}w_{i}\in\Sigma_2(R)$ and $a_{1}w_{i+1}\in\Sigma_2(R)$, we must
  have   $w_{i}<_{2}y$ and $w_{i+1}<_{3}y$, implying $y\in S_i$. This
  contradicts (3).

For (5), let $x\in V(G)\setminus\{a_{1},b\}$. Since $R$ represents $V$
we
know that $b<_jx$ for some $j$, and by definition of $b$ we have
$j\not=1$. Thus, since $R$ is standard, we conclude $a_1<_jb$ also, and
hence $a_1b\in\Sigma_2(R)$, so
$b=w_i$ for some $i$. Provided $|V|\geq 4$, since $R$ is standard we know
$b\notin\{a_2,a_3\}$, and so $0<i<m+1$ by (2). Therefore by (4) we
know $w_{i-1}$ and 
$w_{i+1}$ are distinct common neighbors of $a_1$ and $b$.  
Now suppose on the contrary that
$w_j\notin\{w_{i-1},w_{i+1}\}$ is a third common neighbor of $a_1$
and
$b=w_i$. Suppose $j<i-1$. Then $w_{i-1}<_3w_{j}$, so since
$w_{i-1}<_2w_{i}$, and $w_{i-1}<_1w_i$ by definition of $b=w_i$, we
see that $w_{i-1}$ is not above both of $w_i$ and $w_{j}$ in any
order. This
contradicts the fact  that $w_iw_{j}\in\Sigma_2(R)$. Similarly we find
a contradiction if $j>i+1$. 

For (6), let $z$ be a neighbor of $b=w_{i}$ different from $a_1$, and
suppose on the contrary that $z<_{2}w_{i}$. Again since $|V|\geq 4$ we
have $0<i<m+1$. By (4) we know
$w_{i}w_{i-1}\in\Sigma_2(R)$, so we must have
$w_{i}<_{3}w_{i-1}<_{3}z$. This implies there is no order in which
$w_{i-1}$ is above $z$ and $w_i$, contradicting the fact that
$w_{i}z\in\Sigma_2(R)$. Furthermore, we cannot have $w_{i+1}<_{2}z$,
since in that case $w_{i+1}$ would not be above both $w_{i}$ and $z$ in any order. 
Similarly, we can show that $w_{i}<_{3}z<_{3}w_{i-1}$ as required.
\end{proof}
\end{section}

\begin{section}{Represented graphs are triangulations}
In this section we prove that if $R=R(a_1,a_2,a_3)$ is a standard
representation of 
$V$ then $\Sigma_2(R)$ is a planar triangulation containing
$a_1a_2a_3$ as a face. Our approach will
use induction on $|V|$. Suppose $|V|\geq 4$. For
$b=\max_{<_1}V\setminus\{a_1\}$, we 
define a set $R'$ of three linear orders of the set
$V'=V\setminus\{b\}$ by
suppressing $b$, in other words we let
$<_i'=<_i\vert_{V\setminus\{b\}}$ and set
$R'=\{<_1',<_2',<_3'\}$. Then it is clear from the definitions that
$R'$ is a standard representation of $V'$. 
\begin{lemma}\label{contract} 
With the above definitions, the graph
$\Sigma_2(R')$ is the graph $H$ obtained from $\Sigma_2(R)$ by
 contracting the edge $a_{1}b$ and labelling the new vertex
 $a_1$. 
\end{lemma}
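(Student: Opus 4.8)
The plan is to check, for every pair of distinct vertices $x,y\in V'$, that $xy$ is an edge of $\Sigma_2(R')$ exactly when it is an edge of $H$. I would first record what $H$ is concretely: since $a_1b$ is the only edge of the simple graph $\Sigma_2(R)$ joining $a_1$ and $b$, the contraction creates no loop, and $H$ is the simple graph on $V'$ in which two vertices $x,y\in V'\setminus\{a_1\}$ are adjacent iff $xy\in\Sigma_2(R)$, while $a_1$ is adjacent to $y\in V'\setminus\{a_1\}$ iff $a_1y\in\Sigma_2(R)$ or $by\in\Sigma_2(R)$. The observation driving everything is that the orders of $R'$ are the restrictions of those of $R$, so for $x,y\in V'$ the adjacency test defining $\Sigma_2(R')$ is the same as the one defining $\Sigma_2(R)$ except that the certifying vertex $z$ now ranges over $V'\setminus\{x,y\}=(V\setminus\{x,y\})\setminus\{b\}$ instead of over all of $V\setminus\{x,y\}$. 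Consequently $xy\in\Sigma_2(R)$ always implies $xy\in\Sigma_2(R')$ for $x,y\in V'$, and the real work is in the reverse directions together with absorbing the neighbourhood of $b$ into that of $a_1$.

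For a pair $x,y\in V'\setminus\{a_1\}$ I would show $xy\in\Sigma_2(R')\Rightarrow xy\in\Sigma_2(R)$: by the observation above it suffices to verify the adjacency condition for the single extra vertex $z=b$, and this is immediate, since $b=\max_{<_1}(V\setminus\{a_1\})$ gives $b>_1x$ and $b>_1y$. With the trivial direction this shows $H$ and $\Sigma_2(R')$ agree on all edges within $V'\setminus\{a_1\}$.

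The substantive case is an edge at $a_1$, where I must prove $a_1y\in\Sigma_2(R')\iff(a_1y\in\Sigma_2(R)\text{ or }by\in\Sigma_2(R))$. Here I would use the easy consequence of standardness that for each order $<_k$ the two $<_k$-smallest vertices are exactly the two maxima $a_i$ with $i\neq k$. For $(\Leftarrow)$: if $a_1y\in\Sigma_2(R)$ we are done by the general implication; if instead $by\in\Sigma_2(R)$, take any $z\in V'\setminus\{a_1,y\}=V\setminus\{a_1,b,y\}$ and get $i$ with $z>_ib$ and $z>_iy$, where $i\neq1$ because no vertex of $V\setminus\{a_1,b\}$ is $<_1$-above $b$. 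If $z<_ia_1$ then, $a_1$ being among the two $<_i$-smallest vertices, $z$ would have to be the $<_i$-minimum of $V$, contradicting $z>_ib$; hence $z>_ia_1$, and together with $z>_iy$ this certifies $a_1y\in\Sigma_2(R')$. For $(\Rightarrow)$: assume $a_1y\in\Sigma_2(R')$ but $a_1y\notin\Sigma_2(R)$. Since $a_1y\in\Sigma_2(R')$ already certifies the condition for every $z\in V\setminus\{a_1,b,y\}$, the failure of $a_1y\in\Sigma_2(R)$ must be witnessed by $z=b$, i.e.\ for every $i$ we have $a_1>_ib$ or $y>_ib$. I then test $by\in\Sigma_2(R)$ against all $z'\in V\setminus\{b,y\}$: for $z'=a_1$ use $<_1$, namely $a_1>_1b$ and $a_1>_1y$; for $z'\in V\setminus\{a_1,b,y\}$ take $i$ (necessarily $\neq1$, since $a_1=\max_{<_1}$) with $z'>_ia_1$ and $z'>_iy$, and if $z'<_ib$ then $a_1<_ib$, so by the dichotomy $y>_ib$, whence $z'<_ib<_iy$ contradicts $z'>_iy$; thus $z'>_ib$, and $by\in\Sigma_2(R)$ follows.

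I expect the main obstacle to be the $(\Rightarrow)$ direction of the $a_1$-case: recognising that a violation of $a_1y\in\Sigma_2(R)$ can only be caused by the certifying vertex $z=b$, and then converting the resulting dichotomy ``$a_1>_ib$ or $y>_ib$'' into a proof that $b$ is adjacent to $y$ in $\Sigma_2(R)$. Everything else is routine bookkeeping with the definition of $\Sigma_2$ and the standardness hypothesis.
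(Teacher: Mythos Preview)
Your proof is correct and follows essentially the same approach as the paper's. The only difference is organizational: you split the verification according to whether the edge is incident with $a_1$, whereas the paper splits according to the direction of inclusion ($H\subseteq\Sigma_2(R')$ versus $\Sigma_2(R')\subseteq H$); in the critical case both arguments hinge on the same two facts, that the certifying index is never $1$ (because $b$ is second-largest in $<_1$) and that $a_1$ lies below $b$ in $<_2$ and $<_3$ by standardness.
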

\begin{proof}
First we show that every edge of $H$ is an edge of
$\Sigma_2(R')$. Fix $st\in H$, and suppose $z\in V'\setminus\{s,t\}$. If
$st\in\Sigma_2(R)$, then $s,t<_j z$ for some $j\in\{1,2,3\}$. Thus
$s,t<_j' z$, implying $st\in\Sigma_2(R')$ as required. Otherwise
$st\notin\Sigma_2(R)$, in which case $a_1\in\{s,t\}$, say $s=a_1$, and
$t$ is a 
neighbor of $b$ in $\Sigma_2(R)$. Thus since $bt\in\Sigma_2(R)$,
there exists $j\in\{1,2,3\}$ so that $b,t<_j z$. Since $z\not=a_{1}$
we know $j\not= 1$, as there is no element $z\neq a_1$ which satisfies
	  $b<_{1}z$.
      Since the representation is standard, $a_1<_{j}b$ if $j\neq
	  1$. So
      we have that $a_1,t <_j' z$, showing $st\in\Sigma_2(R')$.

Now we show that every edge $st\in\Sigma_2(R')$ is an edge of $H$. If
$st\in\Sigma_2(R)$ then $s,t\neq b$ so $st\in H$. If
$st\notin\Sigma_2(R)$ then some element of $V$ is not above $s$ and $t$ in
any order. Since $st\in\Sigma_2(R')$ the only possibility for this
element is $b$. This implies $a_1\in \{s,t\}$, say $s=a_{1}$, and that
$b<_{2}t$ and $b<_{3}t$. By
definition of contraction, it suffices to prove that $t$ is a
neighbor of $b$ in $\Sigma_2(R)$. Let $y\in V\setminus\{b,t\}$. If
$y=a_{1}$, then $b,t<_{1}y$. If $y\neq a_{1}$, then since
$a_{1}t\in\Sigma_2(R')$ we know that $a_{1},t<'_{2}y$ or
$a_{1},t<'_{3}y$. This implies $a_{1},t<_{2}y$  or $a_{1},t<_{3}y$.
Therefore $b<_2 t<_{2}y$ or
  $b<_3 t<_{3}y$. This shows $bt\in \Sigma_2(R)$, and the proof is
complete. 
\end{proof}

We are now ready to prove the main result of this section.
\begin{theorem}\label{repisplanar} Let $V$ be a set with $|V|\geq 3$ and let
$R=R(a_1,a_2,a_3)$ be a standard representation of $V$. Then
  $\Sigma_2(R)$ is a planar triangulation containing $a_1a_2a_3$ as a
  face.
\end{theorem}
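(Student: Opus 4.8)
The plan is to proceed by induction on $|V|$, exactly as the section heading and the setup with $R'$ and Lemma~\ref{contract} suggest. For the base case $|V|=3$, one checks directly from the definition of $\Sigma_2(R)$ that the three vertices are pairwise adjacent (each pair $a_ia_j$ has only the third vertex $a_k$ to worry about, and since $R$ is standard $a_i,a_j<_k a_k$), so $\Sigma_2(R)$ is the triangle $a_1a_2a_3$, which is trivially a planar triangulation with $a_1a_2a_3$ a face (bounded or outer, both faces being that triangle).

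For the inductive step, suppose $|V|\geq 4$ and set $b=\max_{<_1}V\setminus\{a_1\}$. By Lemma~\ref{sigfacts}(5), $b=w_i$ is a neighbor of $a_1$ whose only common neighbors with $a_1$ are $w_{i-1}$ and $w_{i+1}$, and $a_1w_{i-1},a_1w_{i+1},w_{i-1}w_i,w_iw_{i+1}\in\Sigma_2(R)$ by parts~(2) and~(4). By the induction hypothesis applied to the standard representation $R'$ of $V'=V\setminus\{b\}$, the graph $\Sigma_2(R')$ is a planar triangulation with $a_1a_2a_3$ a face; fix such a plane embedding. By Lemma~\ref{contract}, $\Sigma_2(R')$ is obtained from $\Sigma_2(R)$ by contracting the edge $a_1b$. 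The strategy is to reverse this contraction geometrically: inside a small disk around the vertex $a_1$ of the embedding of $\Sigma_2(R')$, ``split'' $a_1$ into the two vertices $a_1$ and $b$ joined by an edge, distributing the edges formerly at $a_1$ between the new $a_1$ and $b$. The edges that should go to $b$ are precisely those incident to $b$ in $\Sigma_2(R)$, namely $w_{i-1}b$, $w_ib$, $w_{i+1}b$, and $w_ib$... more carefully, the neighbors of $b$ other than $a_1$, which by Lemma~\ref{sigfacts}(6) lie (in the $<_2$ and $<_3$ orders, hence in the cyclic order around $a_1$ in the embedding) in a contiguous block between $w_{i-1}$ and $w_{i+1}$. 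This contiguity is exactly what allows the split to be done planarly, creating the new edge $a_1b$ inside a single face.

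The heart of the argument, and the main obstacle, is verifying that after this split we recover precisely $\Sigma_2(R)$ with a valid plane embedding, and that the result is still a triangulation. Three things need checking. First, that the neighbors of $b$ in $\Sigma_2(R)$ form a contiguous arc in the rotation at $a_1$ in the embedding of $\Sigma_2(R')$: this follows because $w_{i-1}$ and $w_{i+1}$ are consecutive neighbors of $a_1$ on each side once the $b$-neighbors are removed --- one must match up the combinatorial order from Lemma~\ref{sigfacts}(1),(3),(6) with the rotation system of the embedded triangulation, using that in a triangulation consecutive neighbors in the rotation are joined by an edge and that the faces at $a_1$ are triangles. Second, that no new adjacencies or non-adjacencies are introduced among $V'$: this is immediate since on $V'$ the graphs $\Sigma_2(R)$ and $\Sigma_2(R')$ agree (Lemma~\ref{contract} and its proof show contracting $a_1b$ only merges $b$ into $a_1$). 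Third, that every face of the new embedding is a triangle: the faces away from the split are untouched triangles, and the faces created by the split are bounded by $a_1$, $b$, and one of $w_{i-1}$, $w_{i+1}$ together with the triangular faces that subdivide the region formerly incident to $a_1$ --- one checks $a_1bw_{i-1}$ and $a_1bw_{i+1}$ are faces and the remaining faces at $b$ are the triangles $bw_jw_{j+1}$ for the $b$-neighbors $w_j$, which are faces of $\Sigma_2(R)$ by the $S_i=\emptyset$ type argument of Lemma~\ref{sigfacts}(3),(4) applied now to $b$. Finally, since $b\neq a_2,a_3$ and the split happens in the interior of a face not equal to $a_1a_2a_3$ (as $a_2,a_3$ are not both $b$-neighbors --- indeed neither is, since $b\notin\{w_0,w_{m+1}\}=\{a_3,a_2\}$ puts $a_2,a_3$ outside the arc when $i\neq 0,m$, and a short separate check handles $i=1$ or $i=m$), the face $a_1a_2a_3$ survives, completing the induction.
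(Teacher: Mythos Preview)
Your approach is essentially the paper's: induct on $|V|$, apply the induction hypothesis to $R'$, and then undo the contraction of Lemma~\ref{contract} inside the resulting planar embedding by splitting $a_1$ back into $a_1$ and $b$. The paper tightens two of your verification steps---it matches the $<'_2$-order on the neighbors of $a_1$ in $\Sigma_2(R')$ with the rotation at $a_1$ by observing that a vertex link in a triangulation has a \emph{unique} Hamilton cycle, and it replaces your face-by-face triangle check (where invoking Lemma~\ref{sigfacts}(3),(4) ``applied to $b$'' is not quite legitimate as stated) with a one-line edge count: one more vertex and three more edges than the triangulation $\Sigma_2(R')$ forces $\Sigma_2(R)$ to be a triangulation.
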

\begin{proof}
We will proceed by induction on $|V|$. The statement
  holds for $|V|=3$ and $4$, so assume $|V|\geq 5$ and that the
  statement is true for sets of size less than $|V|$. As usual we
  denote
  by $b$ the second-largest element of $<_1$, and by
  $w_0,\ldots,w_{m+1}$ the neighbors of $a_1$ as in Lemma~
  \ref{sigfacts}. Then by Lemma~\ref{sigfacts}(5) $b=w_i$ for some
  $0<i<m+1$.

Let $R'$ be the representation of $V'=V\setminus \{b\}$ obtained by
suppressing $b$. Let
  $z_{0},z_{1},\ldots,z_{d+1}$ be the neighbors of $b$ in
$\Sigma_2(R)$ that are different from $a_{1}$. (Note that
  $d=0$ is possible.) By
Lemma~\ref{sigfacts}(5) we may assume without loss of
  generality that $z_{0}=w_{i-1}$ and $z_{d+1}=w_{i+1}$, and that
  $\{z_1,\ldots,z_d\}\cap\{w_0,\ldots,w_{m+1}\}=\emptyset$. By (6) we
  may assume that
  $w_{i}<_2z_1<_2\ldots<_2z_{d}<_2z_{d+1}=w_{i+1}$. Then by
  Lemma~\ref{contract} the 
  neighborhood of $a_1$ in $\Sigma_2(R')$ is
  $w_0<_2\ldots<_2w_{i-1}<_2z_1<_2\ldots<_2z_{d}<_2w_{i+1}<_2\ldots<_2
  w_{m+1}$. By Lemma~\ref{sigfacts}(4) applied to $R'$ we know
  that consecutive 
  elements in this list are joined by an edge in $\Sigma(R')$, so
  since $w_0w_{m+1}=a_3a_2$ is an edge of $\Sigma_2(R')$, this forms a
  cycle in $\Sigma(R')$ whose vertex set is $\Gamma(a_1)$.

By the induction hypothesis $\Sigma_2(R')$ is a planar triangulation
with $a_1a_2a_3$ as a face, so for convenience let us fix a planar
drawing for which $a_1a_2a_3$ is the outer face. Since in a planar
triangulation there is a unique Hamilton cycle in the subgraph induced
by $\Gamma(v)$ for any vertex $v$ (namely the cycle whose edges are
$\{xy:xyv\hbox{ is a face}\}$), we have a
drawing as shown in Figure \ref{fig}(b). 
\begin{figure}[hbtp]
\begin{center}
%\parbox{.25\textwidth}{
\input{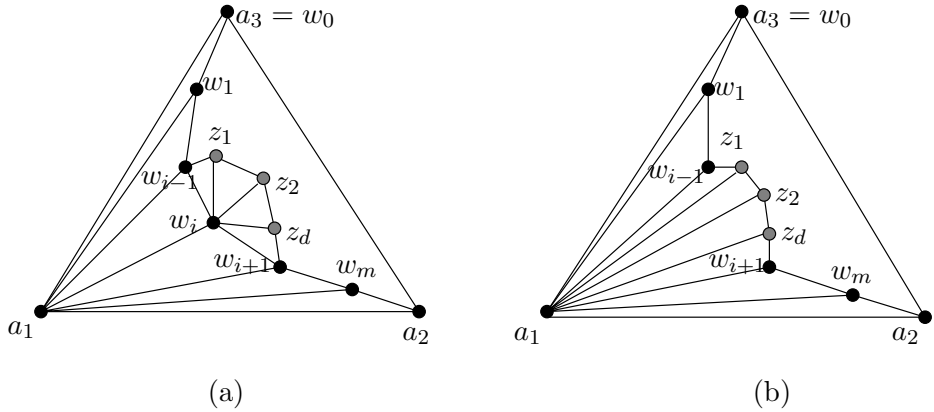}
\caption{The neighborhood of $a_1$}
\label{fig}
%}
\end{center}
\end{figure}
We obtain a new planar drawing from this drawing of $\Sigma_2(R')$ by
removing all edges $a_1z_j$ 
for $1\leq j\leq d$, adding a new vertex $w_i=b$ inside the region
bounded by the cycle $a_1z_{0}\ldots z_{d+1}$, and joining $w_i$ to all
of $a_1,z_{0},\ldots,z_{d+1}$ (see Figure \ref{fig}(a)). This is a
planar drawing of $\Sigma_2(R)$, because by Lemma~\ref{contract} every
edge of $\Sigma_2(R)$ that is not an edge of $\Sigma_2(R')$ is
incident to $b$, and the set of edges of $\Sigma_2(R)$ incident to $b$
is precisely $\{ba_1,bz_{0},\ldots,bz_{d+1}\}$. Finally, we note that
since
$\Sigma_2(R')$ is a triangulation and $\Sigma_2(R)$ is a planar
graph with one more vertex and three more
edges, we see $\Sigma_2(R)$ is a planar triangulation, and it has $a_1a_2a_3$
as a face. Thus by induction the proof is complete.
\end{proof}

\end{section}
\begin{section}{Triangulations are represented graphs}
Our aim in this section is to prove the other implication of Theorem
\ref{sch}, as follows.

\begin{theorem} Let $G$ be a planar triangulation and let $a_1a_2a_3$ be a
  face in $G$. Then there exists a standard representation
  $R=R(a_1,a_2,a_3)$  of $V=V(G)$ such that $G=\Sigma_2(R)$.
\end{theorem}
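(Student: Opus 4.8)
The plan is to induct on $|V(G)|$, deleting a well-chosen vertex, applying the induction hypothesis to get a standard representation of the smaller triangulation, and then re-inserting the vertex while extending the three orders. The natural candidate for deletion is a vertex $b$ adjacent to $a_1$ on the outer face, so that contracting $a_1b$ (equivalently, deleting $b$ after rerouting its non-$a_1,a_2,a_3$ neighbors to $a_1$) again yields a planar triangulation $G'$ with $a_1a_2a_3$ as a face. This is exactly the inverse of the vertex-insertion operation described in the proof of Theorem~\ref{repisplanar}: if $b$ has neighbors $a_1, z_0, \ldots, z_{d+1}$ in cyclic order around $b$ (with $z_0, z_{d+1}$ the two common neighbors of $a_1$ and $b$), then $G' = (G - b) + \{a_1 z_1, \ldots, a_1 z_d\}$. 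By induction there is a standard representation $R' = R'(a_1, a_2, a_3)$ of $V' = V \setminus \{b\}$ with $\Sigma_2(R') = G'$.

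Next I would reconstruct $R$ from $R'$ by inserting $b$ into each of the three linear orders $<_1', <_2', <_3'$. Guided by Lemma~\ref{sigfacts}, the placement must be: $b$ should be the second-largest element of $<_1$ (placed just below $a_1$); in $<_2$, $b$ should be placed so that its $<_2$-neighbors-to-be $w_{i-1} = z_0$ and $w_{i+1} = z_{d+1}$ straddle it appropriately, i.e. $b$ goes immediately after $z_d$ (so that $z_1 <_2 \cdots <_2 z_d <_2 b <_2 z_{d+1}$); and symmetrically in $<_3$, $b$ goes immediately after $z_1$ on the other side, so that $z_{d+1} <_3 \cdots$ and $z_0 = w_{i-1}$ ends up just above $b$ in $<_3$. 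Here $w_{i-1}, w_{i+1}$ are determined as the predecessor and successor of the block $z_1, \ldots, z_d$ in the $<_2'$-order of $\Gamma_{G'}(a_1)$. One then checks that $R = \{<_1, <_2, <_3\}$ is still a standard representation of $V$: it represents $V$ because every pair already separated in $R'$ stays separated, and the pairs involving $b$ are separated by construction (e.g. $b <_1 a_1$, and for $x \neq a_1$ either $b <_2 x$ or $b <_3 x$ since $b$ sits low in $<_2$ and $<_3$ except relative to finitely many vertices, which are handled by the straddling placement); and it is standard because $a_1, a_2, a_3$ retain their extremal positions and $b$ is not one of them.

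The core of the argument is then to verify $\Sigma_2(R) = G$. For the forward inclusion, an edge $st$ of $\Sigma_2(R)$ not involving $b$ restricts to an edge of $\Sigma_2(R') = G'$; if that edge is one of the rerouted $a_1 z_j$ it cannot actually be an edge of $\Sigma_2(R)$ because $b$ witnesses the failure (this is where the placement of $b$ "between" the $z_j$'s in $<_2$ and $<_3$ is used), and otherwise it is already an edge of $G$. Edges of $\Sigma_2(R)$ incident to $b$ must be shown to be exactly $ba_1, bz_0, \ldots, bz_{d+1}$. Conversely, each edge of $G$ must be shown to survive in $\Sigma_2(R)$: edges of $G'$ that are genuine (not the rerouted ones) keep their witness in the extended orders since inserting $b$ only adds a vertex below; the edges $a_1 b$ and $b z_j$ need their separating indices checked directly against the constructed orders. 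I expect the main obstacle to be precisely this last verification — pinning down the exact insertion positions of $b$ in $<_2$ and $<_3$ so that simultaneously (i) the rerouted edges $a_1 z_j$ are killed, (ii) all $d+2$ edges from $b$ to the $z_j$'s are created, and (iii) no spurious edges from $b$ appear — and confirming via Lemma~\ref{sigfacts}(3) (emptiness of the sets $S_i$) applied to $R'$ that the $z_j$'s occupy consistent relative positions in $<_2'$ and $<_3'$, which is what makes a consistent placement of $b$ possible.
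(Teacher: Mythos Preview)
Your overall strategy---induct on $|V|$, contract a suitable edge $a_1 b$, apply induction to get $R'$ with $\Sigma_2(R')=G'$, then reinsert $b$ into the three orders---matches the paper's, but there are two gaps. First, you do not justify the choice of $b$: the phrase ``adjacent to $a_1$ on the outer face'' is confusing (the outer vertices are exactly $a_1,a_2,a_3$), and contracting $a_1b$ yields a \emph{simple} triangulation only when $a_1$ and $b$ have exactly two common neighbours. Not every interior neighbour of $a_1$ has this property; the paper secures one via a shortest-chord argument on the link cycle $w_0\cdots w_{m+1}$ of $a_1$.

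Second, and this is the essential error, your insertion positions for $b$ are wrong. Placing $b$ immediately after $z_d$ in $<_2$ and immediately after $z_1$ in $<_3$ puts $b$ \emph{above} every $z_j$ ($1\le j\le d$) in both of those orders (recall $z_1<_2'\cdots<_2'z_d$ and $z_d<_3'\cdots<_3'z_1$), while $b$ is already second-largest in $<_1$; hence $z_j<_i b$ for all $i\in\{1,2,3\}$, and $R$ fails even to represent $V$. The correct placement is the opposite: insert $b=w_i$ just above $w_{i-1}$ in $<_2$ and just above $w_{i+1}$ in $<_3$, so that $b$ sits \emph{below} all of $z_1,\dots,z_d$ in both orders (this is what makes Lemma~\ref{sigfacts}(6) hold for the new $b$). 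With that placement one checks $G\subseteq\Sigma_2(R)$ directly; the reverse inclusion then comes for free from Theorem~\ref{repisplanar}, since $\Sigma_2(R)$ is itself a triangulation on $V$ and cannot strictly contain the triangulation $G$. Your plan to verify $\Sigma_2(R)\subseteq G$ by hand is therefore unnecessary.
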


\begin{proof}
Again we will use induction on $|V|$. If $|V|=3$ or $4$ then the result is
true so we assume $|V|\geq 5$. Consider a planar drawing of $G$ for
which $a_1a_2a_3$ is the outer face. Then the neighborhood of $a_1$
is a cycle $a_3=w_0w_1\ldots w_mw_{m+1}=a_2$. Since $G$ is a triangulation,
there exists $w_i$ with $1\leq i\leq m$ such
that $a_1$ and $w_i$ have exactly two common neighbors, namely
$w_{i-1}$ and $w_{i+1}$. (If the cycle $C_0=w_0w_1\ldots w_mw_{m+1}$
is chordless we may choose $w_i=w_1$, otherwise we may choose $w_i$
such that $w_{i-1}w_j$ is a shortest chord of $C_0$, where $j>i$.)
Then the neighbors of $w_i$ in $G$ form a cycle 
$a_1z_0z_1\ldots z_dz_{d+1}$ where $z_0=w_{i-1}$ and $z_{d+1}=w_{i+1}$
(see Figure \ref{fig}(a)).

Let $G'$ be the planar triangulation obtained from $G$ by contracting
the edge $a_1w_i$ and giving the resulting vertex the label
$a_1$. Then the neighborhood of $a_1$ in $G'$ 
is the cycle $a_3=w_0w_1\ldots w_{i-1}z_1\ldots z_dw_{i+1}\ldots
w_mw_{m+1}=a_2$ (see Figure \ref{fig}(b)). 
Since $a_1a_2a_3$ is a face of $G'$, by induction there
exists a standard representation 
  $R'=R'(a_1,a_2,a_3)$  of $V'=V\setminus\{w_i\}$ such that
$G'=\Sigma_2(R')$.   

We claim that $w_0<'_2\ldots<'_2 w_{i-1}<'_2z_1<'_2\ldots<'_2
z_d<'_2w_{i+1}<'_2\ldots <'_2w_{m+1}$ in $R'$. To see this, observe
that by Lemma~\ref{sigfacts}(2) and (4) applied to $R'$, there is a
cycle $C$ with vertex set $\Gamma(a_1)$ in $\Sigma_2(R')$, that
contains the edge $a_2a_3$, and the vertices in this cycle appear in
increasing order in $<'_2$ starting with $w_0=a_3$ and ending with
$w_{m+1}=a_2$. Then since $\Sigma_2(R')$ is a planar triangulation, by
uniqueness of this cycle we must have $C= w_0w_1\ldots
w_{i-1}z_1\ldots z_dw_{i+1}\ldots w_mw_{m+1}$. This proves our claim.

By Lemma~\ref{sigfacts}(1) applied to $R'$ we find
$w_{m+1}<'_{3}\cdots<'_{3}w_{i+1}<'_{3}z_d<'_3\cdots<'_3z_1<'_3w_{i-1}<'_3\cdots<_3w_{0}$.
We define linear orders $R=\{<_1,<_2,<_3\}$ of $V$ from
$R'=\{<'_1,<'_2,<'_3\}$ of $V'$ as follows: we place 
$w_i$ just below $a_1$ in $<'_1$ to form $<_1$. To form $<_2$ we place
$w_i$ just above $w_{i-1}$ in $<'_2$, and we form $<_3$ by placing $w_i$
just above $w_{i+1}$ in $<'_3$.  

Now to complete the proof we verify that $G=\Sigma_2(R)$. We observe
that $R=R(a_1,a_2,a_3)$ is standard by construction. Since
$R'$ represents $V'$, to check that $R$
represents $V$ we just need to verify that
$w_i$ occurs above every $y$ in some order and below every $y$ in some
order. For $y=a_1$ this is immediate as $R$ is standard. For
$y\not=a_1$ we know that $y<_1w_i$. Suppose on the contrary that
$y<_2w_i$ and $y<_3w_i$. Then by our construction $y\leq_2w_{i-1}$ and
$y\leq_3w_{i+1}$. Then $y$ is not above both $a_1$ and $w_{i-1}$, or above
both $a_1$ and $w_{i+1}$, in any order 
$<'_1,<'_2,<'_3$, contradicting the fact that $a_1w_{i-1}$ and
$a_1w_{i+1}$  are edges of $\Sigma_2(R')$. Thus $R$ represents $V$.

Let $uv$ be an edge of $G$. If $uv\in G'=\Sigma_2(R')$ then every
element of $V'=V\setminus\{w_i,u,v\}$ occurs above $u$ and $v$ in some order
$<'_k$ and therefore also in $<_k$. Moreover $w_i$ occurs above $u$
and $v$
in $<_1$ by construction, unless one of them is $a_1$, say $u=a_1$. Since $a_1v \in
G'=\Sigma_2(R')$ we know $v=w_j$ for some $j$. If $j\leq i-1$ then
$a_1,v<_2w_i$ and if $j\geq i+1$ then $a_1,v<_3w_i$. Therefore $uv$ is
an edge of $\Sigma_2(R)$. If $uv\notin G'=\Sigma_2(R')$ then by
definition of contraction we have say $u=w_i$, and 
$v\in\{a_1,z_0,\ldots,z_{d+1}\}$. Now $a_1w_i\in\Sigma_2(R)$ because
$R$ represents $V$, and so $w_i<_2y$ or $w_i<_3y$ for every $y\in
V\setminus\{a_1,w_i\}$. To see that $w_iz_j\in\Sigma_2(R)$, observe
that since $a_1z_j\in\Sigma_2(R')$ by definition of contraction, every
element $y$ of $V\setminus\{w_i,z_j,a_1\}$ occurs above $a_1$ and $z_j$ in some order
$<'_k$. Then $k\in\{2,3\}$, and so also $w_i,z_j<_ky$ by our placement
of $w_i$. Finally note that $w_i,z_j<_1a_1$. Therefore $w_iz_j\in\Sigma_2(R)$. 

Thus we have shown that every edge of $G$ is in $\Sigma_2(R)$. By
Theorem~\ref{repisplanar} we know that $\Sigma_2(R)$ is a planar
triangulation, and since $G$ is also a planar triangulation we conclude
$G=\Sigma_2(R)$. This completes our proof.
\end{proof}

We end with the remark that there are other properties of planar
triangulations $G$ that are nicely captured by properties of a standard
representation $R$ of $V=V(G)$. For example, analogously to the graph
$\Sigma_2(R)$  one can define a 3-uniform hypergraph $\Sigma_3(R)$ with vertex
set $V$ by letting $xyw$ be an edge of
$\Sigma_3(R)$ 
if and only if for every $z\in V$ there exists $i\in\{1,2,3\}$ such
that $z\geq_ix$, $z\geq_iy$ and $z\geq_iw$.
It is quite easy to show that if $R=R(a_1,a_2,a_3)$ is a standard
representation of a vertex set $V$, and $G$ is an embedding of
$\Sigma_2(R)$ in which $a_1a_2a_3$ is the outer face, then
$\Sigma_3(R)=F(G)\setminus\{a_1a_2a_3\}$, where $F(G)$ denotes the set
of faces of $G$. 
 
\end{section}
\begin{section}*{Acknowledgments}
The authors are grateful to Tom Trotter for helpful discussions.
\end{section}

\end{document}